\newtheorem{rem}{\bf {Remark}}
\begin{document}
        
        \title{Delay-Adaptive  Control of  First-order Hyperbolic PIDEs}
        
        \author[1]{Shanshan Wang}
        
        \author[2]{Jie Qi}
        
        \author[3]{Miroslav Krstic}
        
        \authormark{AUTHOR ONE \textsc{et al}}

        \address[1]{\orgdiv{Department of Control Science and Engineering}, \orgname{University of Shanghai for Science and Technology}, \orgaddress{\state{Shanghai}, \country{China}}}
        
        \address[2]{\orgdiv{College of Information Science and Technology}, \orgname{Donghua University}, \orgaddress{\state{Shanghai}, \country{China}}}
        
        \address[3]{\orgdiv{Department of Mechanical and Aerospace Engineering}, \orgname{University of California, San Diego}, \orgaddress{\state{California}, \country{USA}}}
        
        \corres{Jie Qi,
                \email{jieqi@dhu.edu.cn}}
        
        
        \abstract[Summary]{We develop a delay-adaptive controller for a class of first-order hyperbolic partial integro-differential equations (PIDEs) with an unknown input delay. By employing a transport PDE to represent delayed actuator states, the system is transformed into a transport partial differential equation (PDE) with unknown propagation speed cascaded with a PIDE. A parameter update law is designed using  a Lyapunov argument and the infinite-dimensional backstepping technique to establish global stability results. Furthermore, the well-posedness of the closed-loop system is analyzed. Finally, the effectiveness of the proposed method was validated through numerical simulations.}
        
        \keywords{first-order hyperbolic PIDE, delay-adaptive control, input delay, infinite-dimensional backstepping, full-state feedback}
        
        \maketitle
        

        \section{Introduction}
        
        First-order hyperbolic PIDEs are widely used in various engineering applications, including traffic flow \cite{2yu2019, 3bekiaris2018}, pipe flow \cite{4aamo2015}, heat exchangers \cite{5sano2003, 6ghousein2020}, and oil well drilling \cite{7mlayeh2018, 8wang2020}. These applications often involve time delays due to the transportation of matter, energy, and information, which negatively affect the stability and performance of the system. Maintaining a stable fluid temperature is critical for the normal operation of heat exchangers, but the response speed is often limited when regulating fluid temperature, resulting in a time delay \cite{sano2019boundary}. The exact value of the delay is usually hard to measure, which becomes a significant source of uncertainty within the controlled process \cite{GuptaParametric2018}. Controlling the advection process in the presence of unknown delays is, therefore, a challenging task with practical significance. Thus, addressing the stabilization problem of first-order hyperbolic PIDEs with unknown input delays is of great practical importance.
        
        Recently, there have been many studies on the stability of first-order hyperbolic PIDEs \cite{Coron2010Exact, SANO2011917}, and the development of infinite-dimensional backstepping techniques in \cite{9krstic2008book} has provided effective methods for the PDE system control problems. \cite{10krstic2008} applies this method to the control of  unstable open-loop hyperbolic PIDEs and developed a backstepping-based controller to stabilize the system. Subsequently, control problems for $2\times 2$ first-order PDEs \cite{11vazquez2011, 12Coron2013, SU2020109147}, $n+1$ coupled first-order hyperbolic PDEs \cite{13di2013}, and $m+n$ anisotropic hyperbolic systems \cite{14hu2015, 15hu2019} were investigated by employing the infinite-dimensional backstepping approach. In reference \cite{anfinsen2020}, a state feedback controller was designed for hyperbolic PIDEs with time-varying system parameters using this infinite-dimensional backstepping method, and the controller ensures that the system state converges to zero in the $H_\infty$ norm within a finite time. Furthermore, a stabilizing controller and observer for hyperbolic PIDEs with Fredholm integrals were constructed in \cite{16bribiesca2015}, and the results of \cite{17xu2017} were extended to output regulation problems. \cite{18coron} demonstrated the equivalence between finite-time stabilization and exact controllability properties for first-order hyperbolic PIDEs with Fredholm integrals. For linear anisotropic hyperbolic systems without integral terms, finite-time output regulation problems were addressed in \cite{19deutscher2017}, and stabilization problems for linear ODEs with linear anisotropic PDEs were solved in \cite{20di2018}. 
        
Infinite-dimensional backstepping has also been
                applied to adaptive control of hyperbolic PDEs. The pioneering work was presented in \cite{BERNARD20142692}, where an adaptive stabilization method was developed for a one-dimensional (1-D) hyperbolic system with a single uncertain parameter. Since then, this method has been extensively applied to various types of hyperbolic PDEs with unknown parameters, as presented in the extensive literature \cite{Yu2017Adaptive, 7917364, 23Anfinsen2019, WANG2020108640, xu2023}. The aforementioned results are built based on three traditional adaptive schemes, including the Lyapunov design, the passivity-based design,  and the swapping design, which were initially proposed for nonlinear ODEs \cite{Krstic1995}, and extended to the boundary adaptive control of PDEs \cite{Krstic2008, SMYSHLYAEV20071543, SMYSHLYAEV20071557}. Combined with backstepping design, a novel control strategy is proposed for coupled hyperbolic PDEs with multiplicative sensor faults in \cite{guo2023}, it utilized a filter-based observer and model-based fault parameter estimation technique to achieve the tracking objective.
        
In recent years, studies began to pay attention to the time delays that occur in first-order PIDE systems since delays are commonly encountered in engineering practice. For instance, in \cite{sano2019boundary}, input delays were considered, and a backstepping boundary control was designed for first-order hyperbolic PIDEs. An observer-based output feedback control law was proposed for a class of first-order hyperbolic PIDEs with non-local coupling terms in the domain and measurement delay compensation\cite{25qi2021}. Reference  \cite{26zhang2022}  addressed the output boundary regulation problem for a first-order linear hyperbolic PDE considering disturbances in the domain and on the boundary as well as state and sensor delays. Recently, the robustness of output feedback for hyperbolic PDEs with respect to small delays in actuation and measurements was discussed in \cite{27auriol2020}. Research on adaptive control for unknown arbitrary delays in PDE systems is relatively scarce. In contrast, there have been significant research achievements in the adaptive control of ODE systems with unknown delays. A notable theoretical breakthrough by developing adaptive control methods to compensate for uncertain actuator delays is achieved in \cite{BRESCHPIETRI20092074}. Subsequently, the delay adaptive control technique has been  applied to various types of unknown delays in ODE systems, including single-input delay \cite{Bresch2010, BEKIARISLIBERIS2010277}, multi-input delay \cite{zhu20181} and distributed input delay \cite{Zhu20201, Zhu20203}. Inspired by these studies, recent work on parabolic systems with unknown input delays is presented in \cite{wang2021adaptive, wang2021delay}. However, research on hyperbolic PDE systems with delays remains relatively limited.      For the first-order hyperbolic systems with uncertain transport speed, parameter estimators and adaptive controllers are designed in \cite{Anfinsen2017Estimation, 9183027} by using swapping filters. Different from these two studies, we apply a Lyapunov argument combined with the infinite-dimensional backstepping technique to design a delay-adaptive controller that achieves global stability in this paper, since the Lyapunov based adaptive methods are known to provide better transient performance \cite{Krstic2008}.

        In this  paper, we consider a hyperbolic PIDE with an arbitrarily large unknown input delay.   We extend the previous work on parabolic PDEs \cite{wang2021adaptive, wang2021delay} to a first-order PIDE system. We employ the infinite-dimensional backstepping method and choose the classic update law for the unknown delay, resulting in the structuring of the target system as a "cascade system", and the target transport PDE has two extra nonlinear terms which are controlled by the delay estimation  error and the delay update law. 
        The $L^2$ global stability of the target system is proven using appropriate Lyapunov functionals. The inverse Volterra/backstepping transformation establishes the norm equivalence relationship between the target system and the original one, thereby achieving $L^2$ global stability of the PDE system under the designed adaptive delay compensation controller. Furthermore, the well-posedness of the closed-loop system is analyzed.  
        
        Main contributions of this paper are:
        
        {(1) This paper develops a combined approach of the infinite-dimensional backstepping and the Lyapunov functional method for delay-adaptive control design for a class of  hyperbolic PIDEs with unknown input delay. In  \cite{wang2021adaptive}, the presence of nonzero boundary conditions in the parabolic PDE target system with unknown input delay restricts us to the local stability of the closed-loop system  with delay update law. However, we leverage the property first-order hyperbolic of the system to attain global stability of the closed-loop system.}
        
        {(2) The well-posedness of the closed-loop system is established. Due to the presence of nonlinear terms and non-zero boundary conditions in the target system, the proof of well-posedness is not straightforward. We use the semigroup method to analyze the well-posedness of the target system, and construct Lyapunov functions to establish the system's asymptotic stability in the $H^1$ norm, thereby ensuring the global existence of the classical solution. Due to the invertibility of the backstepping transformation, the equivalence between the target system and the closed-loop system can be established, so that the closed-loop system is well-posed.}

        The structure of this paper is as follows: Section \ref{2} briefly describes the design of a nonadaptive controller for the considered hyperbolic PIDE system. Section \ref{3} discusses the design of the delay-adaptive control law. Section \ref{4} is dedicated to the stability analysis of the resulting adaptive closed-loop system and the well-posedness of the closed-loop system. Section \ref{5} provides consistent simulation results to demonstrate the feasibility of our approach. The paper ends with concluding remarks in Section \ref{6}.

        \textbf{Notation:} Throughout the paper, we adopt the following notation to define the $L^2$-norm for  $\chi(x)\in L^2[0,1]$:
        \begin{align}
                &\rVert \chi\rVert^2_{L^2}=\int\nolimits_{-1}^1|\chi(x)|^2\mathrm dx,
        \end{align}
        and  set $\rVert  \chi\rVert^2=\rVert \chi\rVert^2_{L^2}$. 
        
        For any given function  $\psi (\cdot , \hat D (t))$  \begin{align}
                \frac{\partial\psi (\cdot , \hat D (t))}{\partial t}= \dot{\hat D }(t) \frac{\partial\psi(\cdot,\hat D (t))}{\partial \hat D (t)}.
        \end{align}
        
        \section{Problem Statement and non-adaptive controller}\label{2}
        Consider the first-order PIDE with an input delay $D>0$,  
        \begin{align}
                \label{equ-u0}
                &u_t(x,t)= u_{x}(x,t)+g(x) u(0,t)+\int\nolimits_0^xf(x,y)u(y,t)\mathrm dy,\\
                &u(1,t)=U(t-D),\\
                \label{equ-ubud0}
                &u(x,0)=u_0(x),
        \end{align}
        for $ (x,t)\in (0,1)\times \mathbb{R}_+$, where  $g(x),f(x,y)\in C[0,1]$ are known coefficient functions.  Following \cite{Krstic2009}, the delayed input $U(t-D)$ is written as a transport equation coupled with \eqref{equ-u0} as follows: 
        \begin{align}
                \label{equ-ucas}
                & u_t(x,t)= u_{x}(x,t)+g(x)  u(0,t)+\int\nolimits\nolimits_0^xf(x,y)u(y,t)\mathrm dy,\\
                &u(1,t)=v(0,t),\\
                &u(x,0)=u_0(t),\\
                &Dv_t(x,t)=v_x(x,t),~~~x\in[0,1),\\
                &v(1,t)=U(x,t),\label{equ-ucas0}\\
                &v(x,0)=v_0(x),\label{equ-t0}
        \end{align}
        where the infinite-dimensional actuator state is solved as 
        \begin{align}
                v(x,t)=U(t+D(x-1)).\label{equ-VandU}
        \end{align}
        To design the delay-compensated controller $U(t)$, the backstepping transformation as follows can be employed:
        \begin{align}
                \label{equ-tranuw}
                w(x,t)=&u(x,t)-\int\nolimits\nolimits_{0}^xk(x,y) u(y,t)\mathrm dy,\\
                \label{equ-tranvz}
                z(x,t)=&v(x,t)-\int\nolimits\nolimits_{0}^1\gamma(x,y) u(y,t)\mathrm dy-D\int\nolimits\nolimits_0^xq(x-y)v(y,t)\mathrm  dy,
        \end{align}
        where the kernel function $k(x,y)$ and $q(x-y)$ are defined on $\mathcal{T}_1=\{(x,y): 0\leq y \leq x\leq 1\}$, $\gamma(x,y)$ on $\mathcal{T}_2 =\{(x,y): 0\leq y,~x \leq 1\}$, which gives the following target system 
        \begin{align}
                \label{equ-w0}
                & w_t(x,t)=w_{x}(x,t),\\
                &w(1,t)=z(0,t),\\
                & w(x,0)= w_0(x),\\
                \label{equ-z0}
                &Dz_t(x,t)=z_x(x,t),\\
                &z(1,t)=0,\label{equ-zcas0}\\
                &z(x,0)=z_0(x),\label{equ-zt0}
        \end{align}
        with a mild solution for $z$
        \begin{align} z(x,t)=
                \begin{cases}
                        z_0(x+\frac{t}{D}),             & 0\le x+\frac{t}{D}\le 1,\\
                        0,          & x+\frac{t}{D}>1,
                \end{cases} 
        \end{align}
        Using the backstepping method, one can get the kernel equations 
        \begin{align}
                k_x(x,y)=&-k_y(x,y)+\int\nolimits\nolimits_y^xf(\tau,y)k(\tau,y)\mathrm d\tau-f(x,y),\label{equ-k0}\\
                k(x,0)=&\int\nolimits\nolimits_0^x k(x,y)g(y)\mathrm dy-g(x),\\
                \gamma_x(x,y)=&-D\gamma_{y}(x,y)+D\int\nolimits\nolimits_y^1f(\tau,y)\gamma(x,\tau)\mathrm d\tau,\label{equ-gamma0}\\
                \gamma(x,0)=&\int\nolimits\nolimits_0^1g(y)\gamma(x,y)\mathrm dy,\\
                \gamma(0,y)=&k(1,y),\label{equ-gamma3}\\
                q(x)=&\gamma(x,1).\label{equ-q1}
        \end{align}
        From the boundary conditions \eqref{equ-ucas0} and \eqref{equ-zcas0}, the associated control law is straightforwardly derived
        \begin{align}
                U(t)=\int\nolimits\nolimits_{0}^{1}\gamma(1,y) u(y,t)\mathrm dy+D\int\nolimits\nolimits_0^1q(1-y)v(y,t)\mathrm dy.\label{equ-U}
        \end{align}
        Knowing that the transformations \eqref{equ-tranuw}--\eqref{equ-tranvz} are invertible with inverse transformation as
        \begin{align}
                \label{equ-tranwu}
                u(x,t)=&w(x,t)+\int\nolimits\nolimits_{0}^xl(x,y) w(y,t)\mathrm dy,\\
                \label{equ-tranzv}
                v(x,t)=&z(x,t)+\int\nolimits\nolimits_{0}^1\eta(x,y) w(y,t)\mathrm dy-D\int\nolimits\nolimits_0^xp(x-y)z(y,t)\mathrm  dy,
        \end{align}
        where kernels $l(x,y)$, $\eta(x,y)$ and $p(x-y)$ satisfy the following PDEs,
        \begin{align}
                &l_x(x,y)+l_y(x,y)=-\int\nolimits\nolimits_y^xf(\tau,y)l(\tau,y)\mathrm d\tau-f(x,y),\label{equ-l0}\\
                &l(x,0)=-g(x),\\
                &\eta_x(x,y)+D\eta_{y}(x,y)=0,\label{equ-eta0}\\
                &\eta(x,0)=0,\\
                &\eta(0,y)=l(1,y),\label{equ-eta3}\\
                &p(x)=\eta(x,1).\label{equ-p1}
        \end{align}
        Next, we will develop an adaptive controller with delay update law to stabilize \eqref{equ-ucas}--\eqref{equ-t0} for the arbitrarily long unknown delay.

        \section{Design of a Delay-Adaptive Feedback Control}\label{3}
        \subsection{Adaptive control design}
        Considering the plant \eqref{equ-u0}--\eqref{equ-ubud0} with an unknown delay $D>0$, which equivalent to the cascade system    \eqref{equ-ucas}--\eqref{equ-t0} with an unknown propagation speed  $1/D$, we will design an adaptive boundary controller to ensure global stability result.  
        \newtheorem{assuption}{\bf{Assumption}}
        \begin{assuption}\label{as1}
                The upper   and  lower bounds   $\overline D$ and $\underline{D}$   for delay $D>0$ are known.
        \end{assuption}
        Based on the certainty equivalence principle, we rewrite controller \eqref{equ-U} by replacing $D$ with estimated delay $
        \hat D(t)$ as the delay-adaptive controller
        \begin{align}
                \label{equ-hatU}
                U(x,t)=\int\nolimits\nolimits_{0}^{1}\gamma(1,y,\hat D(t)) u(y,t)\mathrm dy+\hat D(t)\int\nolimits\nolimits_0^{1}q(1-y,\hat D(t))v(y,t)\mathrm dy.
        \end{align}

        \subsection{Target system for the plant with  unknown input delay}
        Rewriting the backstepping transformations \eqref{equ-tranzv}  as 
        \begin{align}
                \label{equ-unvz}
                z(x,t)=v(x,t)-\int\nolimits\nolimits_{0}^1\gamma(x,y,\hat D(t)) u(y,t)\mathrm dy-\hat D(t)\int\nolimits\nolimits_0^xq(x-y,\hat D(t))v(y,t)\mathrm dy,
        \end{align}
        and its inverse \eqref{equ-tranzv}  as:
        \begin{align}
                \label{equ-unuw1}
                v(x,t)=z(x,t)+\int\nolimits\nolimits_{0}^1\eta(x,y,\hat D(t)) u(y,t)\mathrm dy+\hat D(t)\int\nolimits\nolimits_0^xp(x-y,\hat D(t))z(y,t)\mathrm dy,
        \end{align} 
        where the kernels $\gamma(x,y,\hat D(t))$, $q(x-y,\hat D(t))$, $\eta(x,y,\hat D(t))$, $p(x-y,\hat D(t))$ satisfy the same form of PDEs \eqref{equ-k0}-\eqref{equ-q1} and \eqref{equ-l0}-\eqref{equ-p1} except $D$ replaced with $\hat D(t)$. Using the transformation \eqref{equ-tranuw} and  \eqref{equ-unvz}, we get the following target system
\begin{align}
        \label{equ-unw1}
        & w_t(x,t)= w_{x}(x,t),\\
        \label{equ-bud1}
        & w(1,t)=z(0,t),\\
        \label{equ-checku0}
        & w(x,0)= w_0(x),\\
        \label{equ-bud4}
        &Dz_t(x,t)=z_x(x,t)-\tilde{D}(t)P_{1}(x,t)-D\dot{\hat{D}}(t)P_{2}(x,t),\\
        \label{equ-bud3}
        &z(1,t)=0, \\
        &z(x,0)=z_0(x), \label{equ-zt1}
\end{align}
where $\tilde D(t)=D-\hat D(t)$ is the estimation error, functions $P_{i}(x,t),  i=1,2$ are given below:
\begin{align}
        P_{1}(x,t)=& z(0,t)M_1(x,t)+\int\nolimits\nolimits_{0}^1 w(y,t)M_2(x,y,t)\mathrm  dy,\label{equ-P1}\\
        P_{2}(x,t)=&\int\nolimits\nolimits_{0}^1 z(y,t)M_3(x,y,t)\mathrm  dy+\int\nolimits\nolimits_{0}^1w(y,t)M_4(x,y,t)\mathrm  dy,\label{equ-P2}
\end{align}
with 
\begin{align}
        M_1(x,t)=&\gamma(x,1,\hat D(t)),\\
        \nonumber M_2(x,y,t)=&\gamma(x,1,\hat D(t))l(1,y)-\gamma_{y}(x,y,\hat D(t))\\
        &+\int\nolimits\nolimits_y^1\bigg(-\gamma_y(x,\xi,\hat D(t))l(\xi,y)+\gamma(x,\xi,\hat D(t))f(\xi,y)+\int\nolimits\nolimits_\xi^1\gamma(x,\tau,\hat D(t))f(\tau,\xi)l(\xi,y)\mathrm d\tau\bigg)\mathrm d\xi,\\
        \nonumber M_{3}(x,y,t)=&q(x-y,\hat D(t))+ q_{\hat D(t)}(x-y,\hat D(t))+\hat D(t)\int\nolimits\nolimits_y^xq(x-\xi,\hat D(t))p(\xi-y,\hat D(t))\mathrm  d\xi\\
        &+\hat D(t)^2\int\nolimits\nolimits_y^xq_{\hat D(t)}(x-\xi,\hat D(t))p(\xi-y,\hat D(t))\mathrm  d\xi,\\
        \nonumber M_{4}(x,y,t)=&\gamma_{\hat D(t)}(x,y,\hat D(t))+\int\nolimits\nolimits_y^1\gamma_{\hat D(t)}(x,\xi,\hat D(t))l(\xi,y)\mathrm  d\xi+\int\nolimits\nolimits_0^xq(x-\xi,\hat D(t))\eta(\xi,y,\hat D(t))\mathrm  d\xi\\
        &+\hat D(t)\int\nolimits\nolimits_0^xq_{\hat D(t)}(x-\xi,\hat D(t))\eta(\xi,y,\hat D(t))\mathrm  d\xi.\label{equ-M4}
\end{align}

\subsection{The parameter update law}
We choose the following update law\begin{align}
        \label{equ-law1}
        \dot{\hat D}(t)=\theta\mathrm{Proj}_{[\underline D,\overline D]}\{\tau(t)\},~~~~0<\theta<\theta^*,
\end{align}
where 
$\tau(t)$ is given as 
\begin{align}
        \label{equ-tau}
        &\tau(t)=\frac{-b_1\int\nolimits\nolimits_0^1(1+x)z(x,t)P_{1}(x,t)\mathrm dx}{N(t)},
\end{align}
with $N(t)=\frac{1}{2}\int\nolimits\nolimits_0^1 (1+x)w (x,t)^{2}\mathrm dx+\frac{b_1}{2}\int\nolimits\nolimits_0^1(1+x)z(x,t)^{2}\mathrm d x$,~$b_1>2\bar D$ and 
\begin{align}\label{equ-thetastar}
        \theta^*=\frac{\min\{\underline D,{b_1}-2\bar D\}\min\{1,b_1\}}{2b_{1}^2L^2}.
\end{align}
The standard projection operator is defined as follows
\begin{align}
        \mathrm{Proj}_{[\underline D,\overline D]}\{\tau(t)\}=\left\{
        \begin{array}{rcl}
                0 ~~~~    &    & {\hat D(t)=\underline D~\mbox{and}~\tau(t)<0},\\
                0   ~~~~  &    & {\hat D(t)=\overline D ~\mbox{and}~\tau(t)>0},\\
                \tau(t)~~     &    & \mbox{else}.
        \end{array} \right.\label{equ-law2}
\end{align}

\begin{rem}
        The projection is used to ensure the parameters $\hat D(t)$ within the known bounds $[\underline D, \overline D]$ which cannot be viewed as a  robust tool \cite{Krstic2008}. It  prevents adaptation transients by over-limiting the size of the adaptation gain. The projection
        set can be taken conservatively and can be large, however, in order to ensure stability, the size needs to be
        inversely proportional to the adaptation gain.
\end{rem}

%


\section{ The global stability of the closed-loop system under the delay-adaptive control}\label{4}
The  following theorem states  the global  stability result of the closed-loop  system \eqref{equ-ucas}--\eqref{equ-t0} with update law \eqref{equ-law2} and adaptive controller \eqref{equ-hatU}.

\begin{theorem} \label{theorem1}
        \rm Consider the closed-loop system consisting of the plant  \eqref{equ-ucas}--\eqref{equ-t0}, the control law \eqref{equ-hatU}, and the update law \eqref{equ-law1}--\eqref{equ-law2} under  Assumption \ref{as1}. There  exist  positive constants $\rho$, $R$ such that 
        \begin{align}
                \Psi(t)\leq R(e^{\rho\Psi(0)}-1), \quad \forall t\geq0,\label{equ-Psi}
        \end{align}
        where 
        \begin{align}\label{psi}
                \Psi(t)=&\int\nolimits\nolimits_0^1 u(x,t)^2\mathrm dx+\int\nolimits\nolimits_0^1 v(x,t)^2\mathrm dx+\tilde D(t)^2.\end{align}
        Furthermore, 
        \begin{align}
                &\lim_{t\to \infty}\max_{x\in[0,1]}|u(x,t)|=0,\\
                &\lim_{t\to \infty}\max_{x\in[0,1]}|v(x,t)|=0.
        \end{align}
\end{theorem}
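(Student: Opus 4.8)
The plan is to carry out the entire analysis in the target variables $(w,z)$, since the target system \eqref{equ-unw1}--\eqref{equ-zt1} is a pure cascade of transport equations whose only complication is the pair of perturbation terms $\tilde D(t)P_1$ and $D\dot{\hat D}(t)P_2$ in the $z$-equation, and then to transfer the conclusion back to $(u,v)$ through the invertibility of the backstepping transformation. As a Lyapunov functional I would take the normalized form
\begin{align}
V(t)=\log\!\bigl(1+N(t)\bigr)+\frac{1}{2\theta b_1}\tilde D(t)^2, \nonumber
\end{align}
where $N(t)$ is the weighted energy appearing in \eqref{equ-tau}. The logarithm is the key structural choice: since $\dot V\le 0$ will give $\log(1+N(t))\le V(0)$, we immediately obtain a bound of the form $N(t)\le e^{c\,\Psi(0)}-1$, which is exactly the exponential-of-initial-data shape demanded by \eqref{equ-Psi}; the normalization of $\tau(t)$ by $N(t)$ in \eqref{equ-tau} is designed precisely to mesh with the factor $1/(1+N)$ produced by differentiating the logarithm.

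Next I would compute $\dot V$. Differentiating the weighted $L^2$ energies and integrating the transport terms $w_x$ and $z_x$ by parts produces boundary contributions at $x=0,1$ together with interior terms $-\tfrac12\|w\|^2$ and $-\tfrac{b_1}{2D}\|z\|^2$. Using $w(1,t)=z(0,t)$, $z(1,t)=0$ and crucially the hypothesis $b_1>2\bar D\ge 2D$, the boundary terms combine into a negative-definite expression in $w(0,t)^2$ and $z(0,t)^2$ (the coefficient $\tfrac{b_1}{2D}-1$ is positive exactly because $b_1>2\bar D$). The term linear in $\tilde D$ generated by $\int(1+x)zP_1$ is, by construction, the numerator of $\tau(t)$; invoking the standard projection inequality $-\tilde D\,\mathrm{Proj}_{[\underline D,\bar D]}\{\tau\}\le-\tilde D\,\tau$ lets the update law \eqref{equ-law1}--\eqref{equ-law2} absorb this indefinite contribution, so that no uncontrolled $\tilde D$-term survives.

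The remaining obstacle, and the heart of the proof, is to dominate the term $-b_1\dot{\hat D}\int_0^1(1+x)z\,P_2\,\mathrm dx$ coming from the $D\dot{\hat D}P_2$ perturbation. This requires two ingredients. First, uniform-in-$\hat D$ bounds on the kernels $\gamma,q,\eta,p$ and their $\hat D$-derivatives over the compact interval $[\underline D,\bar D]$, which through the definitions \eqref{equ-P1}--\eqref{equ-M4} yield operator estimates $\|P_i(\cdot,t)\|\le L\bigl(|z(0,t)|+\|w\|+\|z\|\bigr)$ with a single constant $L$; this $L$ is the one entering $\theta^*$ in \eqref{equ-thetastar}. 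Second, because $\dot{\hat D}=\theta\,\mathrm{Proj}\{\tau\}$ and $\tau$ is normalized by $N$, the factor $\dot{\hat D}$ is itself controlled by the same energy, so after Cauchy--Schwarz and Young's inequalities this term is quadratic in $(w,z)$ and scales linearly with $\theta$. The explicit threshold $\theta^*=\min\{\underline D,b_1-2\bar D\}\min\{1,b_1\}/(2b_1^2L^2)$ is engineered so that for $0<\theta<\theta^*$ this contribution is strictly dominated by the negative interior terms, giving
\begin{align}
\dot V(t)\le -c\,\frac{\|w(\cdot,t)\|^2+\|z(\cdot,t)\|^2}{1+N(t)}\le 0 \nonumber
\end{align}
for some $c>0$. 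I expect the careful bookkeeping of these sign conditions, rather than any single estimate, to be the main difficulty.

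From $\dot V\le 0$ I would read off $\log(1+N(t))\le V(0)$ and the boundedness of $\tilde D$, then use the norm equivalence between $(w,z)$ and $(u,v)$ furnished by the bounded, boundedly invertible transformations \eqref{equ-tranuw}, \eqref{equ-unvz}, \eqref{equ-tranwu}, \eqref{equ-unuw1} (with bounds uniform in $\hat D\in[\underline D,\bar D]$) to convert the bound on $N$ and $\tilde D^2$ into the claimed estimate $\Psi(t)\le R(e^{\rho\Psi(0)}-1)$. For the asymptotic statements, integrating the displayed bound and using that $N$ is bounded shows $\int_0^\infty(\|w\|^2+\|z\|^2)\,\mathrm dt<\infty$; checking that $\tfrac{d}{dt}(\|w\|^2+\|z\|^2)$ is bounded (so the integrand is uniformly continuous) and applying Barbalat's lemma gives $\|w(\cdot,t)\|,\|z(\cdot,t)\|\to0$, hence $\|u\|,\|v\|\to0$. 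Finally, to upgrade $L^2$ convergence to the pointwise limits $\max_x|u|,\max_x|v|\to0$, I would invoke the uniform $H^1$ bound on $(u,v)$ established in the well-posedness analysis together with Agmon's inequality $\|f\|_{L^\infty}^2\le C\|f\|_{L^2}\|f\|_{H^1}$, which forces the sup-norms to zero once the $L^2$ norms do.
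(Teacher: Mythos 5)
Your proposal follows essentially the same route as the paper: the normalized logarithmic Lyapunov functional $D\log(1+N(t))+\tilde D(t)^2/(2\theta)$ with the projection absorbing the $\tilde D P_1$ term and the smallness condition $\theta<\theta^*$ dominating the $\dot{\hat D}P_2$ term, the norm equivalence via the invertible backstepping transformations to obtain $\Psi(t)\le R(e^{\rho\Psi(0)}-1)$, and square-integrability plus a Barbalat-type argument combined with boundedness of $\|w_x\|,\|z_x\|$ (which the paper obtains from auxiliary $H^1$ Lyapunov functionals $V_2,V_3$) and Agmon's inequality for the pointwise regulation. The differences are only cosmetic normalizations of the functional, so the proposal is correct and matches the paper's proof.
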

The global stability of the $(u, v)$-system is established by the following steps:
\begin{itemize}
        \item We establish the norm equivalence between $(u, v)$ and $( w, z)$.
        \item We introduce a Lyapunov function to prove the global stability of the $(w, z)$-system \eqref{equ-unw1}--\eqref{equ-zt1}, and then get the stability of system $(u, v)$ by using the norm equivalence.
        \item We arrive at the regulation of states $u(x,t)$ and $v(x,t)$.
\end{itemize}


\subsection{Global stability of  the closed-loop system}
First, we discuss the equivalent stability property between the plant   \eqref{equ-ucas}--\eqref{equ-t0}  and the target system \eqref{equ-unw1}--\eqref{equ-zt1}. Call now kernel functions $k(x,y),\ \gamma(x,y),\ q(x-y),\ l(x,y),\ \eta(x,y),\  $ and $p(x-y)$ are bounded by $\overline k,\ \overline \gamma,\ \overline q,\ \overline l,\ \overline \eta,$ and $\overline p$ and  in their respective domains. From \eqref{equ-tranuw}, \eqref{equ-tranvz}, \eqref{equ-tranwu}, and \eqref{equ-tranzv} it is easy to find, by using Cauchy-Schwarz inequality, that
\begin{align}
        &\rVert u(t)\rVert^2+\rVert v(t)\rVert^2\leq r_1 \rVert w (t)\rVert^2+r_2\rVert z(t)\rVert^2,\label{theo1}\\
        &\rVert  w (t)\rVert^2+\rVert z(t)\rVert^2\leq s_1\rVert u(t)\rVert^2+s_2\rVert v(t)\rVert^2,\label{theo2}
\end{align}
where $r_i$ and $s_i$, $i=1,2$ are  positive constants given by
\begin{align}
        &r_1=2+2{\overline l}^2+3\overline \eta^2,\\
        &r_2=3+3\overline D^2\overline p^2,\\
        &s_1=2+2\overline k^2+3\overline \gamma^2,\\
        &s_2=3+3\overline D^2\overline q^2.
\end{align}
Next, we prove  the global stability of the closed-loop system consisting of the  $(u, v)$-system under the control law \eqref{equ-hatU}, and the update law \eqref{equ-law1}-\eqref{equ-law2}.
Introducing a Lyapunov-Krasovskii-type function
\begin{align}
        \nonumber V_{1}(t)=&D\log (1+N(t))+\frac{\tilde D(t)^2}{2\theta},\label{equ-V}
\end{align}
where $N(t)=\frac{1}{2}\int\nolimits\nolimits_0^1 (1+x)w (x,t)^{2}\mathrm dx+\frac{b_1}{2}\int\nolimits\nolimits_0^1(1+x)z(x,t)^{2}\mathrm d x$, based on the target system \eqref{equ-unw1}--\eqref{equ-zt1} and where $b_{1}$ is a positive constant.

Taking the time derivative of \eqref{equ-V} along \eqref{equ-unw1}--\eqref{equ-zt1}, we get
\begin{align}
        \nonumber\dot V_{1}(t)=&\frac{D}{N(t)}\bigg(\int\nolimits\nolimits_0^1 (1+x)w (x,t)w_t (x,t)\mathrm dx+b_1\int\nolimits\nolimits_0^1(1+x)z(x,t)z_t(x,t)\mathrm dx\bigg)-\tilde D(t)\frac{\dot{\hat D}(t)}{\theta}\\
        \nonumber=&\frac{1}{N(t)}\bigg(D\int\nolimits\nolimits_0^1 (1+x)w (x,t)w_x(x,t)\mathrm dx+b_1\int\nolimits\nolimits_0^1(1+x)z(x,t)(z_x(x,t)-\tilde{D}(t)P_{1}(x,t) -D\dot{\hat{D}}(t)P_{2}(x,t))\mathrm dx\bigg)-\tilde D(t)\frac{\dot{\hat D}(t)}{\theta}\\
        \nonumber=&\frac{1}{N(t)}\bigg(Dw(1,t)^{2}-\frac{D}{2}w(0,t)^{2}-\frac{D}{2}\rVert w\rVert^2-\frac{b_1}{2}z(0,t)^{2}-\frac{b_1}{2}\rVert z\rVert^2\\
        &-b_1\tilde{D}(t)\int\nolimits\nolimits_0^1(1+x)z(x,t)P_{1}(x,t)\mathrm dx-b_1D\dot{\hat{D}}(t)\int\nolimits\nolimits_0^1(1+x)z(x,t)P_{2}(x,t)\mathrm dx\bigg)-\tilde D(t)\frac{\dot{\hat D}(t)}{\theta},
\end{align}
where we have used integration by parts, Cauchy-Schwarz, and Young's inequalities. 
Using \eqref{equ-law1}--\eqref{equ-thetastar} and the standard properties of the projection
operator leads to 
\begin{align}
        \nonumber\dot V_{1}(t)\leq&\frac{1}{N(t)}\bigg(-\frac{D}{2}\rVert w\rVert^2-\frac{b_1}{2}\rVert z\rVert^2-(\frac{b_1}{2}-D)z(0,t)^{2}\\
        &-b_1D\dot{\hat{D}}(t)\int\nolimits\nolimits_0^1(1+x)z(x,t)P_{2}(x,t)\mathrm dx\bigg),
\end{align}
where $b_{1}>2\bar D$.

After a lengthy but straightforward calculation, employing the Cauchy-Schwarz and Young inequalities, along with \eqref{equ-P1} and \eqref{equ-P2}, yields the following estimates
\begin{align}
        \label{equ-ineq1}
        &\int\nolimits\nolimits_0^1(1+x)z(x,t)P_{1}(x,t)\mathrm dx\leq L(\rVert w\rVert^2+\rVert z\rVert^2+\rVert z(0,t)\rVert^2),\\
        &\int\nolimits\nolimits_0^1(1+x)z(x,t)P_2(x,t)\mathrm dx\leq L(\rVert w\rVert^2+\rVert z\rVert^2),
        \label{equ-P2xinequ}
\end{align}
where the parameter $\bar L$  is defined below 
\begin{align}
        \nonumber \bar L=\max\bigg\{\overline M_1+\overline M_2,2\overline M_3+\overline M_4\bigg\},
\end{align}
where $\overline M_1=\max_{0\leq x\leq1,\ t\geq0}\{|M_1(x,\hat D(t))|\}$, $\overline M_{i}=\max_{0\leq x\leq y\leq1,~t\geq0} \{|M_{i}(x,y,\hat D(t))|\}$ for $i=2,3,4$. 

{According to the equivalent stability property between the plant   \eqref{equ-ucas}--\eqref{equ-t0}  and the target system \eqref{equ-unw1}--\eqref{equ-zt1}}, we can get
\begin{align}
        \dot V_{1}\leq&-\left(\min\{\frac{\underline D}{2},\frac{b_1}{2}-\bar D\}\right.\left.-\frac{\theta b_{1}^2L^2}{\min\{1,b_1\}}   \right)\frac{\rVert w\rVert^2+\rVert z\rVert^2+\rVert z(0,t)\rVert^2}{N(t)}.\label{equ-dot-V1}
\end{align}
Choosing $\theta\in(0,\theta^\star)$, where $\theta^\star$ defined by \eqref{equ-thetastar}, we know $\dot V_{1}(t)\leq0$, which gives
\begin{align}
        V_{1}(t)\leq V_{1}(0),\label{equ-V_0} 
\end{align}
for all $t\geq0$.
Hence, we get the following estimates from \eqref{equ-V}:
\begin{align}
        &\rVert w\rVert^2\leq 2(e^{\frac{V_1(t)}{\underline D}}-1),\label{equ-0}\\
        &\rVert z\rVert^2\leq \frac{2}{b_1}(e^{\frac{V_1(t)}{\underline D}}-1),\label{equ-2}\\
        &\tilde D(t)\leq \frac{2\theta V_1(t)}{\underline D}.\label{equ-3}
\end{align}

Furthermore, from \eqref{equ-V}, \eqref{theo2} and \eqref{equ-0}-\eqref{equ-2}, it follows that
\begin{align}\label{61}
        \rVert u\rVert^2+\rVert v\rVert^2\leq \left(2r_1+\frac{2r_{2}}{b_1}\right)(e^{\frac{V_1(t)}{\underline D}}-1),
\end{align}
and combining \eqref{equ-3} and \eqref{61}, we get
\begin{align}
        \Psi(t)\leq\left(2r_1+\frac{2r_{2}}{b_1}+\frac{2\theta}{\underline D}\right)(e^{\frac{V_1(t)}{\underline D}}-1).\label{equ-psi-inq0}
\end{align}
So, we have bounded $\Psi(t)$ in terms of $V_1(t)$ and thus, using \eqref{equ-V_0}, in terms of  $V_{1}(0)$. Now we have to bound $V_1(0)$ in terms of $\Psi(0)$. First, from  \eqref{equ-V}, it follows that 
\begin{align}
        \nonumber V_{1}(t)=D&\mathrm{log}\bigg(1+\frac{1}{2}\int\nolimits\nolimits_0^1(1+s)w(x,t)^{2}\mathrm dx+\frac{b_1}{2}\int\nolimits\nolimits_0^1(1+s)z(x,t)^{2}\mathrm dx\bigg)+\frac{\tilde D(t)^2}{2\theta}\\
        \nonumber\leq& \bar D\rVert w\rVert^2+b_{1}\bar D\rVert z\rVert^{2}+\frac{\tilde D(t)^2}{2\theta}\\
        \nonumber\leq&\bar D\max\{1,b_{1}\}(s_1+s_2)\left(\rVert u\rVert^2+\rVert v\rVert^{2}\right)+\frac{\tilde D(t)^2}{2\theta}\\
        \leq&\bigg(\bar D\max\{1,b_{1}\}(s_1+s_2)+\frac{1}{2\theta}\bigg)\Psi(t),
\end{align}
leading to the following relation
\begin{align}
        V_1(0)\leq&\left(\bar D\max\{1,b_{1}\}(s_1+s_2)+\frac{1}{2\theta}\right)\Psi(0).\label{equ-psi-inq1}
\end{align}
Then, combining \eqref{equ-V_0}, \eqref{equ-psi-inq0} and \eqref{equ-psi-inq1}, we have 
\begin{align}
        \Psi(t)\leq R(e^{\rho\Psi(0)}-1), ~~~~~
\end{align}
where
\begin{align}
        &R=2r_1+\frac{2r_{2}}{b_1}+\frac{2\theta}{\underline  D},\\
        &\rho=\bar D\max\{1,b_{1}\}(s_1+s_2)+\frac{1}{2\theta},
\end{align}
so we complete the proof of the stability estimate \eqref{equ-Psi}.

        \subsection{ Pointwise boundedness and regulation of the distributed states }
        
        Now, we  ensure the regulation of the distributed states.  From  \eqref{equ-V} and  \eqref{equ-dot-V1}, we get the boundedness of $\rVert w\rVert$, $\rVert z\rVert$ and $\hat D(t)$. Knowing that
        \begin{align}
                \label{equ-w_xinteg0}
                \int\nolimits\nolimits_0^t\rVert  w(\tau)\rVert^2\mathrm d\tau\leq \sup_{0\leq\tau \leq t}N(\tau)\int\nolimits\nolimits_0^t\frac{\rVert  w(\tau)\rVert^2}{N(\tau)}\mathrm d \tau,
        \end{align}
        and using   \eqref{equ-V_0} the following inequality holds \begin{align}
                \label{equ-w_xinteg1}
                N(\tau)\leq N(0)e^{\frac{\tilde D(0)^2}{2\theta}}.
        \end{align}
        Integrating \eqref{equ-dot-V1} over $[0,\ t ]$, we have\begin{align}
                &\int\nolimits\nolimits_0^t\frac{\rVert w(\tau)\rVert^2}{N(\tau)}\mathrm d \tau\leq \frac{\bar D\log N(0)+\frac{\tilde D(0)^2}{2\theta}}{\min\bigg\{\frac{1}{2},\frac{b_1}{2}-1\bigg\}-\frac{\theta b_{1}^2L^2}{\min\{1,b_1\}}}.
                \label{equ-w_xinteg2}
        \end{align}
        Substituting \eqref{equ-w_xinteg1} and \eqref{equ-w_xinteg2} into \eqref{equ-w_xinteg0},
        we get $\rVert w\rVert$ is square integrable in time. One can establish that $\rVert z\rVert$ and $\rVert z(0,t)\rVert$ are
        square integrable in time similarly. Thus,  $\rVert P_1\rVert$ and $\rVert P_{2}\rVert$ are bounded and integrable functions of time.
        
        To prove the boundedness of  $\rVert w_{x}\rVert$, we define  
        the following  Lyapunov function 
        \begin{align}
                V_2(t)=\frac{1}{2}\int\nolimits\nolimits_0^1 (1+x)w _x(x,t)^2\mathrm dx+\frac{b_{2}D}{2}\int\nolimits\nolimits_0^1 (1+x)z_x(x,t)^2\mathrm dx,\label{equ-Vnew}
        \end{align}
        where $b_2$ is a positive constant. Using  the integration by parts, the  derivative of \eqref{equ-Vnew} with respect to time is written as 
        \begin{align}
                \label{equ-dotVnew}
                \nonumber\dot V_2(t)=&\int\nolimits\nolimits_0^1 (1+x)w _x(x,t) w _{xt}(x,t)\mathrm dx+b_{2}D\int\nolimits\nolimits_0^1 (1+x)z_x(x,t)z_{xt}(x,t)\mathrm dx\\
                \nonumber=&\int\nolimits\nolimits_0^1 (1+x)w _x(x,t) w _{xx}(x,t)\mathrm dx+b_{2}\int\nolimits\nolimits_0^1 (1+x)z_x(x,t)z_{xx}(x,t)\mathrm dx\\
                \nonumber&-b_2\tilde{D}(t)\int\nolimits\nolimits_0^1(1+x)z_x(x,t)P_{1x}(x,t)\mathrm dx-b_2D\dot{\hat{D}}(t)\int\nolimits\nolimits_0^1(1+x)z_x(x,t)P_{2x}(x,t)\mathrm dx.\\
                \nonumber=&w_{x}(1,t)^2-\frac{1}{2}w_{x}(0,t)^2-\frac{1}{2}\rVert w_{x}\rVert^2+b_2z_{x}(1,t)^2-\frac{b_2}{2}z_{x}(0,t)^2-\frac{b_2}{2}\rVert z_{x}\rVert^2\\
                &-b_2\tilde{D}(t)\int\nolimits\nolimits_0^1(1+x)z_x(x,t)P_{1x}(x,t)\mathrm dx-b_2D\dot{\hat{D}}(t)\int\nolimits\nolimits_0^1(1+x)z_x(x,t)P_{2x}(x,t)\mathrm dx.
        \end{align}
        Based on \eqref{equ-unw1}, \eqref{equ-bud1}, one can get
        \begin{align}
                \nonumber w_x(1,t)=&w_t(1,t)=z_t(0,t)\\
                =&z_x(0,t)-\tilde D(t)P_1(0,t)-D\dot{\hat D}(t)P_2(0,t),\label{equ-w-bud1}
        \end{align}
        we arrive at the following inequality
        \begin{align}
                \nonumber\dot V_2(t)\leq&-\frac{1}{2}\rVert w_{x}\rVert^2-\frac{b_{2}}{2}\rVert z_{x}\rVert^2-(\frac{b_{2}}{2}-3)z_{x}(0,t)^2+3\tilde D(t)^{2}P_1(0,t)^{2}+3D^{2}\dot{\hat D}(t)^{2}P_2(0,t)^2\\
                \nonumber&+2b_{2}\tilde D(t)^{2} P_1(1,t)^{2}+2b_{2}D^2\dot{\hat D}(t)^{2} P_2(1,t)^{2}+2b_{2}|\tilde{D}(t)|\rVert z_{x}\rVert\rVert P_{1x}(x,t)\rVert\\
                &+b_{2}D|\dot{\hat{D}}(t)|\rVert z_{x}\rVert\rVert P_{2x}(x,t)\rVert.
        \end{align}
        
        Choosing  $b_2>6$,
        we get, 
        \begin{align}
                \nonumber\dot V_2(t)\leq&-\frac{1}{2}\rVert w_{x}\rVert^2-\frac{b_{2}}{2}\rVert z_{x}\rVert^2+3\tilde D(t)^{2}P_1(0,t)^{2}+3D^{2}\dot{\hat D}(t)^{2}P_2(0,t)^2+2b_{2}\tilde D(t)^{2} P_1(1,t)^{2}\\
                \nonumber&+2b_{2}\bar D^2\dot{\hat D}(t)^{2} P_2(1,t)^{2}+2b_{2}D|\dot{\hat{D}}(t)|\rVert z_{x}\rVert\rVert P_{2x}\rVert+2b_{2}|\tilde{D}(t)|\rVert z_{x}(x,t)\rVert\rVert P_{1x}(x,t)\rVert\\
                \leq&-c_{1}V_2(t)+f_1(t)V_{2}(t)+f_2(t),
        \end{align}
        where we use Young's and Agmon's inequalities. Here,  $c_{1}=\frac{1}{2}\min\{1,\frac{1}{\overline D}\}$,  and the functions $f_1(t)$ and $f_2(t)$ are given by
        \begin{align}
                \label{equ-f1}
                f_1(t)=&b_{2}\overline D^{2}(|\dot{\hat{D}}(t)|^{2}+4),\\
                \nonumber f_2(t)=&{b_2}\rVert P_{1x}\rVert^2+b_{2}\rVert P_{2x}\rVert^{2}+12\bar D^{2}P_1(0,t)^{2}+3\bar D^{2}\dot{\hat D}(t)^{2}P_2(0,t)^2+8b_{2}\bar D^{2} P_1(1,t)^{2}\\
                &+2b_{2}D^2\dot{\hat D}(t)^{2} P_2(1,t)^{2}.
        \end{align}
        Knowing that 
        \begin{align}
                P_1(0,t)^2\leq&2\overline M_1^2z(0,t)^2+2\overline M_2^2\rVert  w \rVert^2\nonumber\\
                \leq&2\overline M_1^2(\rVert  z\rVert^2+\rVert  z_x\rVert^2)+2\overline M_2^2\rVert  w \rVert^2,\\
                P_2(0,t)^2\leq& 2\overline M_3^2\rVert  z\rVert^2+ 2\overline M_4^2\rVert  w \rVert^2,
        \end{align}
        with \eqref{equ-ineq1} and \eqref{equ-P2xinequ}, we get $ |\dot{\hat D}(t)|$, $P_1(0,t)^2$, $P_2(0,t)^2$, $P_1(1,t)^2$ and $P_2(1,t)^2$  are  integrable. Then,  
        $f_1(t)$ and $f_2(t)$ are also integrable functions of time.
        Using Lemma D.3 \cite{Krstic2010}, we get that $\rVert  w _x\rVert$ and $\rVert  z _x\rVert$ are bounded, and combing the Agmon's inequality, one can deduce  the boundedness of $w(x,t)$  and $z(x,t)$ for all $x\in[0,1]$. 
        
        Next we establish the boundedness of  $\frac{\mathrm d}{\mathrm dt}(\rVert  w\rVert^2)$, $\frac{\mathrm d}{\mathrm dt}(\rVert z\rVert^2)$ and $\frac{\mathrm d}{\mathrm dt}(\rVert z_{x}\rVert^2)$ using the following  Lyapunov function 
        \begin{align}
                V_3(t)=\frac{1}{2}\int\nolimits\nolimits_0^1 (1+x)w_{x} (x,t)^2\mathrm dx+\frac{b_{3}D}{2}\int\nolimits\nolimits_0^1 (1+x)z(x,t)^2\mathrm dx+\frac{b_{3}D}{2}\int\nolimits\nolimits_0^1 (1+x)z_x(x,t)^2\mathrm dx,\label{equ-Vnew3}
        \end{align}
        where $b_3$ is a positive constant. Taking the derivative of \eqref{equ-Vnew3} with respect to time, 
        we obtain
        \begin{align}
                \label{equ-dotVnew3}
                \nonumber\dot V_3(t)=&\int\nolimits\nolimits_0^1 (1+x)w_x(x,t) w _{xt}(x,t)\mathrm dx+b_{3}D\int\nolimits\nolimits_0^1 (1+x)z(x,t)z_{t}(x,t)\mathrm dx\\
                \nonumber&+b_{3}D\int\nolimits\nolimits_0^1 (1+x)z_x(x,t)z_{xt}(x,t)\mathrm dx\\
                \nonumber=&w_{x}(1,t)^2-\frac{1}{2}w_{x}(0,t)^2-\frac{1}{2}\rVert w_{x}\rVert^2-\frac{b_3}{2}z(0,t)^2-\frac{b_3}{2}\rVert z\rVert^2+b_{3}z_{x}(1,t)^2-\frac{b_{3}}{2}z_{x}(0,t)^2\\
                \nonumber&-\frac{b_{3}}{2}\rVert z_{x}\rVert^2-b_3\tilde{D}(t)\bigg(\int\nolimits\nolimits_0^1(1+x)z(x,t)P_{1}(x,t)\mathrm dx+\int\nolimits\nolimits_0^1(1+x)z_{x}(x,t)P_{1x}(x,t)\mathrm dx\bigg)\\
                &-b_3D\dot{\hat{D}}(t)\bigg(\int\nolimits\nolimits_0^1(1+x)z(x,t)P_{2}(x,t)\mathrm dx+\int\nolimits\nolimits_0^1(1+x)z_{x}(x,t)P_{2x}(x,t)\mathrm dx\bigg).
        \end{align}
        Clearly, using integrations by part and Young's inequality, the following holds,
        \begin{align}
                \nonumber\dot V_3(t)\leq&-\frac{1}{2}\rVert  w_{x}\rVert^2-\frac{b_3}{2}\rVert z\rVert^2-(\frac{b_3}{2}-1)z(0,t)^2-\frac{b_3}{2}\rVert z_{x}\rVert^2-(\frac{b_{3}}{2}-3)z_{x}(0,t)^2\\
                \nonumber&+3\tilde D(t)^{2}P_1(0,t)^{2}+3D^{2}\dot{\hat D}(t)^{2}P_2(0,t)^2+2b_{3}\tilde D(t)^{2} P_1(1,t)^{2}+2b_{3}D^2\dot{\hat D}(t)^{2} P_2(1,t)^{2}\\
                \nonumber&+2b_3|\tilde{D}(t)|\rVert z\rVert \rVert P_{1}\rVert+2b_{3}\overline D|\dot{\hat{D}}(t)|\rVert z\rVert \rVert P_{2}\rVert+2b_3|\tilde{D}(t)|\rVert z_x\rVert\rVert P_{1x}\rVert\\
                &+2b_{3}\overline D|\dot{\hat{D}}(t)|\rVert z_{x}\rVert \rVert P_{2x}\rVert.
        \end{align}
        Choosing $b_3>6$,  we have
        \begin{align}
                \nonumber\dot V_3(t)\leq&-\frac{1}{2}\rVert  w_{x}\rVert^2-\frac{b_3}{2}\rVert z\rVert^2-\frac{b_3}{2}\rVert z_{x}\rVert^2+3\tilde D(t)^{2}P_1(0,t)^{2}+3D^{2}\dot{\hat D}(t)^{2}P_2(0,t)^2+2b_{3}\tilde D(t)^{2} P_1(1,t)^{2}\\
                \nonumber&+3D^{2}\dot{\hat D}(t)^{2}P_2(0,t)^2+2b_{3}\tilde D(t)^{2} P_1(1,t)^{2}+2b_{3}D^2\dot{\hat D}(t)^{2} P_2(1,t)^{2}+2b_3|\tilde{D}(t)|\rVert z\rVert \rVert P_{1}\rVert\\
                \nonumber&+2b_{3}\overline D|\dot{\hat{D}}(t)|\rVert z\rVert \rVert P_{2}\rVert+2b_3|\tilde{D}(t)|\rVert z_x\rVert\rVert P_{1x}\rVert+2b_{3}\overline D|\dot{\hat{D}}(t)|\rVert z_{x}\rVert \rVert P_{2x}\rVert\\
                \leq&-c_{1}V_3(t)+f_3(t)V_{3}(t)+f_4(t)<\infty,\label{wt}
        \end{align}
        where we use Young's and Agmon's inequalities, and
        \begin{align}
                \label{equ-f3}
                f_3(t)=&2b_{3}\overline D^{2}(|\dot{\hat{D}}(t)|^{2}+4),\\
                \nonumber f_4(t)=&3\tilde D(t)^{2}P_1(0,t)^{2}+3\bar D^{2}\dot{\hat D}(t)^{2}P_2(0,t)^2+{b_3}(\rVert P_{1}\rVert^2+\rVert P_{2}\rVert^{2}+\rVert P_{1x}\rVert^2+\rVert P_{2x}\rVert^{2}\\
                &+8\bar D^{2} P_1(1,t)^{2}+2\bar D^2\dot{\hat D}(t)^{2} P_2(1,t)^{2}),
        \end{align}
        are bounded functions. Thus, from \eqref{wt}, one can deduce   the boundedness of  $\frac{\mathrm d}{\mathrm dt}(\rVert  w\rVert^2)$, $\frac{\mathrm d}{\mathrm dt}(\rVert z\rVert^2)$ and $\frac{\mathrm d}{\mathrm dt}(\rVert z_{x}\rVert^2)$. Moreover, by Lemma D.2 \cite{Krstic2010}, we get $V_3(t)\to0$, and thus   $\rVert w_x                        \rVert,\rVert z\rVert, \rVert z_{x}\rVert^2\to 0$ as $t\to\infty$. Next, from \eqref{theo1}, we have  $\rVert u_{x}\rVert^2, \rVert v\rVert^2, \rVert v_{x}\rVert^2\to 0$ as $t\to\infty$.

        From \eqref{equ-tranwu}, we have 
        \begin{align}
                &\rVert u_x\rVert^2\leq 2\rVert w_x\rVert^2+2\rVert w\rVert^2\overline l_x^2.
        \end{align}
        Since $\rVert w\rVert,\ \rVert w_x\rVert$ are bounded, $\rVert u_x\rVert$ is also bounded. By Agmon's inequality $u(x,t)^2\leq2\rVert u\rVert\rVert u_x\rVert$, which enables one to state the regulation of $u(x,t)$ to zero uniformly in $x$. Similarly, one can prove the regulation of $v(x,t)$. Since $\rVert v\rVert^2$ and $\rVert v_x\rVert^2\to 0$ as $t\to\infty$, by Agmon's inequality $v(x,t)^2\leq2\rVert v\rVert\rVert v_x\rVert$, which enables one to state the regulation of $v(x,t)$ to zero uniformly in $x$ and completes the proof of Theorem \ref{theorem1}. 
        
        \subsection{Well-posedness of the closed-loop system}
        Following the approach in   \cite{Krstic2010}, we prove  the well-posedness of the closed-loop system in Theorem 1.
        Consider the closed-loop target system $(w,z,\tilde D(t))$:
        \begin{align}
                & w_t(x,t)= w_{x}(x,t),\label{equ-checkw-well}\\
                & w(1,t)=z(0,t),\\
                &z_t(x,t)=\frac{1}{D}z_x(x,t)-\frac{\tilde{D}(t)}{D} P_1(x,t)-\theta\mathrm{Proj}_{[\underline D,\bar D]}\{\tau(t)\} P_2(x,t),\\
                &z(1,t)=0,\\ 
                &\dot{\tilde D}(t)=-\theta\mathrm{Proj}_{[\underline D,\bar D]}\{\tau(t)\},\label{equ-hatD-well}
        \end{align}
        we set $Z=( w,z,\tilde D(t))^{T}$, and  introduce the operator
        \begin{align}
                A=\begin{pmatrix}
                        -\frac{\partial}{\partial x}&0&0\\
                        0&-\frac{\partial}{D\partial x}&0\\
                        0&0&0
                \end{pmatrix},
        \end{align}
        with
        \begin{align}
                F(Z)=\begin{pmatrix}
                        0\\
                        -\frac{\tilde{D}(t)}{D} P_1(x,t)-\theta\mathrm{Proj}_{[\underline D,\bar D]}\{\tau(t)\} P_2(x,t)\\
                        \theta\mathrm{Proj}_{[\underline D,\bar D]}\{\tau(t)\}
                \end{pmatrix}.
        \end{align} 
        Then \eqref{equ-checkw-well}-\eqref{equ-hatD-well}
        can be written in abstract form as
        \begin{align}
                &Z_t=-AZ+F(Z),\label{equ-Zt}\\
                &Z(0)=Z_0.\label{equ-Z0}
        \end{align}
        where $Z=L^2(0,1)\times L^2(0,1)\times\mathbb R$,  $\mathcal{B}(A)=\{f,g,l:f\in H^1(0,1), f(1)=g(0); g\in H^{1}(0,1),g(1)=0;  l\in\mathbb R\}$ and the norm $\rVert Z\rVert_H=\rVert  w\rVert^2+\rVert z\rVert^2+\tilde D^2$. 
        
        Now, we establish the well-posedness of \eqref{equ-Zt}--\eqref{equ-Z0} with the following theorem (see as well Theorem 8.2 \cite{Krstic2010},
        Theorem 2.5.6 \cite{Zheng2004book}, for which a similar method has been
        employed to establish well-posedness).
        
        \begin{theorem} \label{theorem2}
                \rm
                Consider the system  \eqref{equ-Zt}--\eqref{equ-Z0} , where $A$ is a maximal accretive operator from a dense subset
                $\mathcal{B}(A)$ in a Banach space  $H$ into $H$. If $F$
                is a nonlinear operator from $\mathcal{B}(A)$ to $\mathcal{\mathcal{B}}(A)$ and satisfies
                the local Lipschitz condition, then for any $Z_0 \in\mathcal{B}(A)$, the
                problem  \eqref{equ-Zt}--\eqref{equ-Z0} admits a unique classical solution Z
                such that
                \begin{align}
                        Z\in C^1([0,T_{\max}),H)\cap C([0,T_{\max}),\mathcal{B}(A)),
                \end{align}
                where\\
                (i) either $T_{max}=+\infty$, i,e., there is a unique global classical solution\\ 
                (ii) or $T_{max}<+\infty$ and $\lim_{t\to T_{max}-0}\rVert Z(t)\rVert_H=+\infty$. 
        \end{theorem}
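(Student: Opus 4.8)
The plan is to treat the abstract Cauchy problem \eqref{equ-Zt}--\eqref{equ-Z0} as a semilinear evolution equation and to invoke the classical theory of nonlinear semigroups, exactly along the lines of Theorem~8.2 in \cite{Krstic2010} and Theorem~2.5.6 in \cite{Zheng2004book}. The starting point is the accretivity hypothesis: since $A$ is maximal accretive and densely defined on $H$, the Lumer--Phillips theorem guarantees that $-A$ generates a strongly continuous semigroup of contractions $\{S(t)\}_{t\geq0}$, so that $\|S(t)\|\leq1$ for all $t\geq0$. It is worth noting where the structure of the problem enters: the block-diagonal form of $A$ built from the transport operators $-\partial_x$ and $-\frac{1}{D}\partial_x$, together with the coupling boundary conditions $f(1)=g(0)$, $g(1)=0$ encoded in $\mathcal{B}(A)$ and the trivial third component, is precisely what makes $A$ accretive, while maximality follows from solvability of the associated stationary transport problem.

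Given the semigroup, I would recast \eqref{equ-Zt} in its mild (Duhamel) form
\begin{align}
        Z(t)=S(t)Z_0+\int\nolimits_0^t S(t-s)F(Z(s))\,\mathrm ds,\label{equ-mild}
\end{align}
and construct a local solution by the contraction mapping principle. Fixing a radius $r$ and a short horizon $T>0$, I would let $\Phi$ map $Z$ to the right-hand side of \eqref{equ-mild} on the closed ball of radius $r$ about $S(\cdot)Z_0$ in $C([0,T];H)$. The local Lipschitz hypothesis on $F$, say with constant $L_r$ on that ball, together with the contraction bound $\|S(t)\|\leq1$, gives
\begin{align}
        \|\Phi(Z_1)-\Phi(Z_2)\|_{C([0,T];H)}\leq TL_r\|Z_1-Z_2\|_{C([0,T];H)},\nonumber
\end{align}
so that for $T$ small enough $\Phi$ is a strict contraction and Banach's fixed point theorem yields a unique local mild solution.

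The next step is to upgrade this mild solution to a classical one. Because $Z_0\in\mathcal{B}(A)$ and $F$ maps $\mathcal{B}(A)$ into $\mathcal{B}(A)$ while staying locally Lipschitz, the standard regularity theory for semilinear equations applies: using the invariance of $\mathcal{B}(A)$ under $S(t)$ and the continuity of $s\mapsto F(Z(s))$ along the trajectory, one differentiates \eqref{equ-mild} to conclude $Z(t)\in\mathcal{B}(A)$ for all $t$ and that $Z\in C^1([0,T];H)\cap C([0,T];\mathcal{B}(A))$ solves \eqref{equ-Zt} in the strong sense. I expect this bootstrap to be the main obstacle, since it requires controlling the graph norm of $F(Z(s))$ — which here involves the projection operator and the terms $P_1,P_2$ through the kernels — and verifying continuity of the Duhamel integral in $\mathcal{B}(A)$ rather than merely in $H$; in the concrete application this is where the substantive verification of the $F:\mathcal{B}(A)\to\mathcal{B}(A)$ Lipschitz hypothesis really lies.

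Finally, I would pass to the maximal interval of existence by extending the local solution as far as possible, which defines $T_{\max}$. A standard continuation argument then produces the stated dichotomy: if $T_{\max}<+\infty$ while $\|Z(t)\|_H$ remained bounded as $t\to T_{\max}^-$, the uniform local existence window obtained in the fixed point step (its length depending only on the $H$-bound) would allow restarting from a time close to $T_{\max}$ and extending the solution beyond $T_{\max}$, contradicting maximality. Hence either $T_{\max}=+\infty$, giving a global classical solution as in (i), or $\lim_{t\to T_{\max}^-}\|Z(t)\|_H=+\infty$, as in (ii), which completes the proof.
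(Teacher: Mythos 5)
Your argument is correct for the statement as literally written, but it takes a genuinely different route from the paper. You prove the abstract semilinear existence theorem itself: Lumer--Phillips to obtain the contraction semigroup generated by $-A$, the Duhamel (mild) formulation, a Banach fixed-point argument for local existence, a bootstrap to classical regularity using the $\mathcal{B}(A)\to\mathcal{B}(A)$ hypothesis, and a continuation argument for the dichotomy. The paper, by contrast, simply \emph{cites} that abstract machinery (Theorem 8.2 of \cite{Krstic2010}, Theorem 2.5.6 of \cite{Zheng2004book}) and spends its proof on the concrete verifications for the closed-loop target system: that the block transport operator $A$ is maximal accretive (via Example 2.3.1 of \cite{Zheng2004book}); that the concrete nonlinearity $F$, built from $P_1$, $P_2$ and the projected update law, satisfies the quadratic local Lipschitz estimate $\rVert F(Z_1)-F(Z_2)\rVert_H\leq C\rVert Z_1-Z_2\rVert_H\max\{\rVert Z_1\rVert_H,\rVert Z_2\rVert_H\}$; and --- the bulk of the paper's proof --- that alternative (i) actually holds, i.e.\ $T_{\max}=+\infty$, established through a priori estimates on $\rVert w_{xx}\rVert$ and $\rVert z_{xx}\rVert$ using the Lyapunov functional $V_4$ in \eqref{equ-V4}, the boundary identities \eqref{equ-wxx-1}--\eqref{equ-zxx-1}, and Lemma D.3 of \cite{Krstic2010}. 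So your proposal supplies the fixed-point and continuation details the paper delegates to references, while explicitly flagging but deferring exactly the step where the substantive work of this application lies (the graph-norm Lipschitz property of $F$) and omitting the global-existence estimates entirely. One caution on your continuation step: the contraction argument for the \emph{classical} solution runs in the graph norm, so the restart-window length a priori depends on $\rVert Z\rVert_{\mathcal{B}(A)}$ rather than on $\rVert Z\rVert_H$; the $H$-norm blow-up criterion in (ii) is recovered only because the Lipschitz constants are assumed to depend on $H$-norm balls, which lets a Gronwall estimate propagate the graph-norm bound from the $H$-norm bound --- this deserves a sentence if you want the dichotomy exactly as stated.
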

        \begin{proof}
                \rm
                Combining the proof for hyperbolic case (see, e.g., Example 2.3.1 in \cite{Zheng2004book}), we  obtain that $A$ is a maximal accretive operator. Then, it is straightforward to establish that for any $Z_1,\ Z_2 \in  H$,
                \begin{align}
                        \rVert F(Z_1)-F(Z_2)\rVert_H\leq C\rVert Z_1-Z_2\rVert_H\max\{\rVert Z_1\rVert_H,\rVert Z_2\rVert_H\},
                \end{align}
                where $C$ is a constant independent of $Z_1$ and $Z_2$. So, we get
                $F$ to be  locally Lipschitz on $H$. Hence, the system  \eqref{equ-Zt}--\eqref{equ-Z0} has a unique classical solution.
                
                Next, we will establish that  the existence of the classical
                solution is global. In order to prove that $T_{\max} = +\infty$, which means
                there is no blowup, we need to make a priori estimates of the $H^1$
                norm of $w$ {and $z$}. Based on the proof of boundedness of $w$ and $z$ in
                $L^2$ norms, in our present work, one can obtain that
                $w$ {and $z$ are} bounded in $H^1$ by using the following new Lyapunov function
                \begin{align}\label{equ-V4}
                        V_4(t)=\frac{1}{2}\int\nolimits\nolimits_0^1 (1+x)w _{xx}(x,t)^2\mathrm dx+\frac{b_{4}D}{2}\int\nolimits\nolimits_0^1 (1+x)z_{xx}(x,t)^2\mathrm dx.
                \end{align}
                Using  the integration by parts, the  derivative of \eqref{equ-V4} with respect to time is written as 
                \begin{align}
                        \label{equ-dotV4}
                        \nonumber\dot V_4(t)=&\int\nolimits\nolimits_0^1 (1+x)w_{xx}(x,t) w _{xxt}(x,t)\mathrm dx+b_{4}D\int\nolimits\nolimits_0^1 (1+x)z_{xx}(x,t)z_{xxt}(x,t)\mathrm dx\\
                        \nonumber=&\int\nolimits\nolimits_0^1 (1+x)w _{xx}(x,t) w _{xxx}(x,t)\mathrm dx+b_{4}\int\nolimits\nolimits_0^1 (1+x)z_{xx}(x,t)z_{xxx}(x,t)\mathrm dx\\
                        \nonumber&-b_4\tilde{D}(t)\int\nolimits\nolimits_0^1(1+x)z_{xx}(x,t)P_{1xx}(x,t)\mathrm dx-b_4D\dot{\hat{D}}(t)\int\nolimits\nolimits_0^1(1+x)z_{xx}(x,t)P_{2xx}(x,t)\mathrm dx\\
                        \nonumber=&w_{xx}(1,t)^2-\frac{1}{2}w_{xx}(0,t)^2-\frac{1}{2}\rVert w_{xx}(x,t)\rVert^2+b_4z_{xx}(1,t)^2-\frac{b_4}{2}z_{xx}(0,t)^2-\frac{b_4}{2}\rVert z_{xx}(x,t)\rVert^2\\
                        &-b_4\tilde{D}(t)\int\nolimits\nolimits_0^1(1+x)z_{xx}(x,t)P_{1xx}(x,t)\mathrm dx-b_4D\dot{\hat{D}}(t)\int\nolimits\nolimits_0^1(1+x)z_{xx}(x,t)P_{2xx}(x,t)\mathrm dx.
                \end{align}
                Based on \eqref{equ-unw1}, \eqref{equ-bud1}, one can get
                \begin{align}
                        \nonumber w_{xx}(1,t)=&w_{tx}(1,t)=w_{tt}(1,t)=z_{tt}(0,t)\\
                        \nonumber=&\frac{1}{D^2}z_{xx}(0,t)-\frac{\tilde D(t)}{D^2}P_{1x}(0,t)-\frac{\dot{\hat D}(t)}{D}P_{2x}(0,t)+\frac{1}{D}\dot{\hat D}(t)P_1(0,t)-\ddot{\hat D}(t)P_2(0,t)\\
                        &-\frac{1}{D}\tilde D(t)P_{1t}(0,t)-\dot{\hat D}(t)P_{2t}(0,t),\label{equ-wxx-1}\\
                        \nonumber z_{xx}(1,t)=&{\tilde D(t)}P_{1x}(1,t)+D\dot{\hat D}(t)P_{2x}(1,t)-D\dot{\hat D}(t)P_1(1,t)+D^{2}\ddot{\hat D}(t)P_2(1,t)\\
                        &+D\tilde D(t)P_{1t}(1,t)+D^{2}\dot{\hat D}(t)P_{2t}(1,t).\label{equ-zxx-1}
                \end{align}
                Submitting \eqref{equ-wxx-1} and \eqref{equ-zxx-1} into \eqref{equ-dotV4},
                we arrive at the following inequality
                \begin{align}
                        \nonumber\dot V_4(t)\leq&-\frac{1}{2}w_{xx}(0,t)^2-\frac{1}{2}\rVert w_{xx}\rVert^2-\frac{b_4}{2}\rVert z_{xx}\rVert^2-\bigg(\frac{b_4}{2}-\frac{7}{D^4}\bigg)z_{xx}(0,t)^2+2b_4|\tilde{D}(t)|\rVert z_{xx}\rVert \rVert P_{1xx}\rVert \\
                        \nonumber&+2b_4D|\dot{\hat{D}}(t)|\rVert z_{xx}\rVert \rVert P_{2xx}\rVert +\frac{7\tilde D(t)^2}{D^4}P_{1x}(0,t)^{2}+\frac{7\dot{\hat D}(t)^2}{D^{2}}P_{2x}(0,t)^{2}+\frac{7\dot{\hat D}(t)^{2}}{D^{2}}P_1(0,t)^{2}\\
                        \nonumber&+7\ddot{\hat D}(t)^2P_2(0,t)^{2}+\frac{7}{D^2}\tilde D(t)^2P_{1t}(0,t)^2+7\dot{\hat D}(t)^2P_{2t}(0,t)^2+6b_4{\tilde D(t)^2}P_{1x}(1,t)^2\\
                        \nonumber&+6b_4D^2\dot{\hat D}(t)^2P_{2x}(1,t)^2+6b_4D^2\dot{\hat D}(t)^2P_1(1,t)^2+6b_4D^{4}\ddot{\hat D}(t)^2P_2(1,t)^{2}+6b_4D^2\tilde D(t)^2P_{1t}(1,t)^2\\
                        &+6b_4D^{4}\dot{\hat D}(t)^2P_{2t}(1,t)^2.
                \end{align}
                Choosing  $b_4>\frac{14}{\underline D^4}$,
                we get, 
                \begin{align}
                        \nonumber\dot V_4(t)\leq&-\frac{1}{2}\rVert w_{xx}\rVert^2-\frac{b_4}{2}\rVert z_{xx}\rVert^2+b_4\tilde{D}(t)^2\rVert z_{xx}\rVert^{2}+b_4 \rVert P_{1xx}\rVert^{2} +b_4D^{2}\dot{\hat{D}}(t)^2\rVert z_{xx}\rVert^2 +b_4\rVert P_{2xx}\rVert^2\\
                        \nonumber&+\frac{7\tilde D(t)^2}{D^4}P_{1x}(0,t)^{2}+\frac{7\dot{\hat D}(t)^2}{D^{2}}P_{2x}(0,t)^{2}+\frac{7\dot{\hat D}(t)^{2}}{D^{2}}P_1(0,t)^{2}+7\ddot{\hat D}(t)^2P_2(0,t)^{2}\\
                        \nonumber&+\frac{7}{D^2}\tilde D(t)^2P_{1t}(0,t)^2+7\dot{\hat D}(t)^2P_{2t}(0,t)^2+6b_4{\tilde D(t)^2}P_{1x}(1,t)^2+6b_4D^2\dot{\hat D}(t)^2P_{2x}(1,t)^2\\
                        \nonumber&+6b_4D^2\dot{\hat D}(t)^2P_1(1,t)^2+6b_4D^{4}\ddot{\hat D}(t)^2P_2(1,t)^{2}+6b_4D^2\tilde D(t)^2P_{1t}(1,t)^2+6b_4D^{4}\dot{\hat D}(t)^2P_{2t}(1,t)^2\\
                        \leq&-c_{1}V_4(t)+f_5(t)V_{4}(t)+f_6(t),
                \end{align}
                where we use Young's and Agmon's inequalities. Here, $c_{1}=\frac{1}{2}\min\{1,\frac{1}{\overline D}\}$,  and the functions $f_5(t)$ and $f_6(t)$ are given by
                \begin{align}
                        f_5(t)=&b_{5}\overline D^{2}(\dot{\hat{D}}(t)^{2}+4),\label{equ-f5}\\
                        \nonumber f_6(t)=&b_4 \rVert P_{1xx}\rVert^{2}+b_4\rVert P_{2xx}\rVert^2+\frac{28\bar  D^2}{\underline D^4}P_{1x}(0,t)^{2}+\frac{7\dot{\hat D}(t)^2}{\underline D^{2}}P_{2x}(0,t)^{2}+\frac{7\dot{\hat D}(t)^{2}}{\underline D^{2}}P_1(0,t)^{2}\\
                        \nonumber&+7\ddot{\hat D}(t)^2P_2(0,t)^{2}+\frac{28}{D^2}\bar D^2P_{1t}(0,t)^2+7\dot{\hat D}(t)^2P_{2t}(0,t)^2+24b_4{\bar D^2}P_{1x}(1,t)^2\\
                        \nonumber&+6b_4\bar D^2\dot{\hat D}(t)^2P_{2x}(1,t)^2+6b_4\bar D^2\dot{\hat D}(t)^2P_1(1,t)^2+6b_4\bar D^{4}\ddot{\hat D}(t)^2P_2(1,t)^{2}+24b_4\bar D^4P_{1t}(1,t)^2\\
                        &+6b_4\bar D^{4}\dot{\hat D}(t)^2P_{2t}(1,t)^2,\label{equ-f6}
                \end{align}
                based on all above results, we can get all terms in \eqref{equ-f5} and \eqref{equ-f6} are  integrable of time.
                Using Lemma D.3 \cite{Krstic2010}, we get that $\rVert  w_{xx}\rVert$ and $\rVert  z_{xx}\rVert$ are bounded. Then, from \eqref{equ-unw1} and \eqref{equ-bud3}, \begin{align}
                        \label{equ-w-tx}
                        & w_{tx}(x,t)= w_{xx}(x,t),\\
                        \label{equ-z-tx}
                        &Dz_{tx}(x,t)=z_{xx}(x,t)-\tilde{D}(t)P_{1x}(x,t)-D\dot{\hat{D}}(t)P_{2x}(x,t),
                \end{align}              
                we get
                $\rVert  w_{tx}\rVert$ and $\rVert  z_{tx}\rVert$ are bounded. Combing  with  $\rVert w_x\rVert,\rVert z_{x}\rVert^2\to 0$ as $t\to\infty$ and  regulation of $w(x,t)$ and $z(x,t)$, one can get   $\rVert w_t                        \rVert,\rVert z_{t}\rVert^2\to 0$ as $t\to\infty$, and then, by using the Agmon's inequality,   the  regulation  of $w_t(x,t)$  and $z_t(x,t)$ is proven for all $x\in[0,1]$. Therefore, we have proved that $\rVert Z\rVert_H$ is bounded and global
                classical solution exists.
        \end{proof} 
        
        Finally, we can get the well-posedness of the closed-loop system  consisting of the plant  \eqref{equ-ucas}--\eqref{equ-t0}, the control law \eqref{equ-hatU}, and the update law \eqref{equ-law1}--\eqref{equ-law2} the under Assumption 1 based on the invertibility of the backstepping transformations \eqref{equ-tranwu} and \eqref{equ-unuw1}.

        \section{Simulation}\label{5}
        To illustrate the feasibility of the proposed adaptive controller design, we simulate the closed-loop system  consisting  \eqref{equ-ucas}--\eqref{equ-ucas0},  the control law \eqref{equ-U}, and the  update law defined through \eqref{equ-law1}--\eqref{equ-law2}. 
        The actual delay is set to  $D = 2$ assuming known upper  and lower bounds defined as  $\bar D = 4$ and $\underline D=0.1$, respectively. 
        The adaptation gain is set to $\theta = 0.021$, the plant coefficients are chosen as $g(x)=2(1-x)$ and  $f(x,y)=\cos(2\pi x)+4\sin(2\pi y))$. The simulations are performed considering    $u_0(x)=4\sin(\pi x),\ v_{0}(x)=0$ as initial conditions with  $\hat D_{0}=1$ and $\hat D_{0}=3$, respectively.
        Figure \ref{figure} shows  the convergence of  the  plant's state $u(x,t)$ with and without adaptation, respectively.  In the absence of adaptation, but with a "mismatch input delay" set to
        $\hat D(t) = 3$ (the true delay being $D = 2$). Figure \ref{figure2} (a) shows the dynamics of the  $L^2$-norm of the plant state $\|u(x,t)\|_{L^2}$  with and without adaptation, respectively.  The   control effort is displayed in Figure \ref{figure2} (b) and the update law  in Figure \ref{figure2} (c). Finally,  Figure \ref{figure2} (d)  reflects a good estimate  of the delay  with $\hat D(t)$ converging to the true value $D=2$.
        \begin{figure}[htbp]
                \centering
                \subfigure[]{\includegraphics[width=0.32\textwidth]{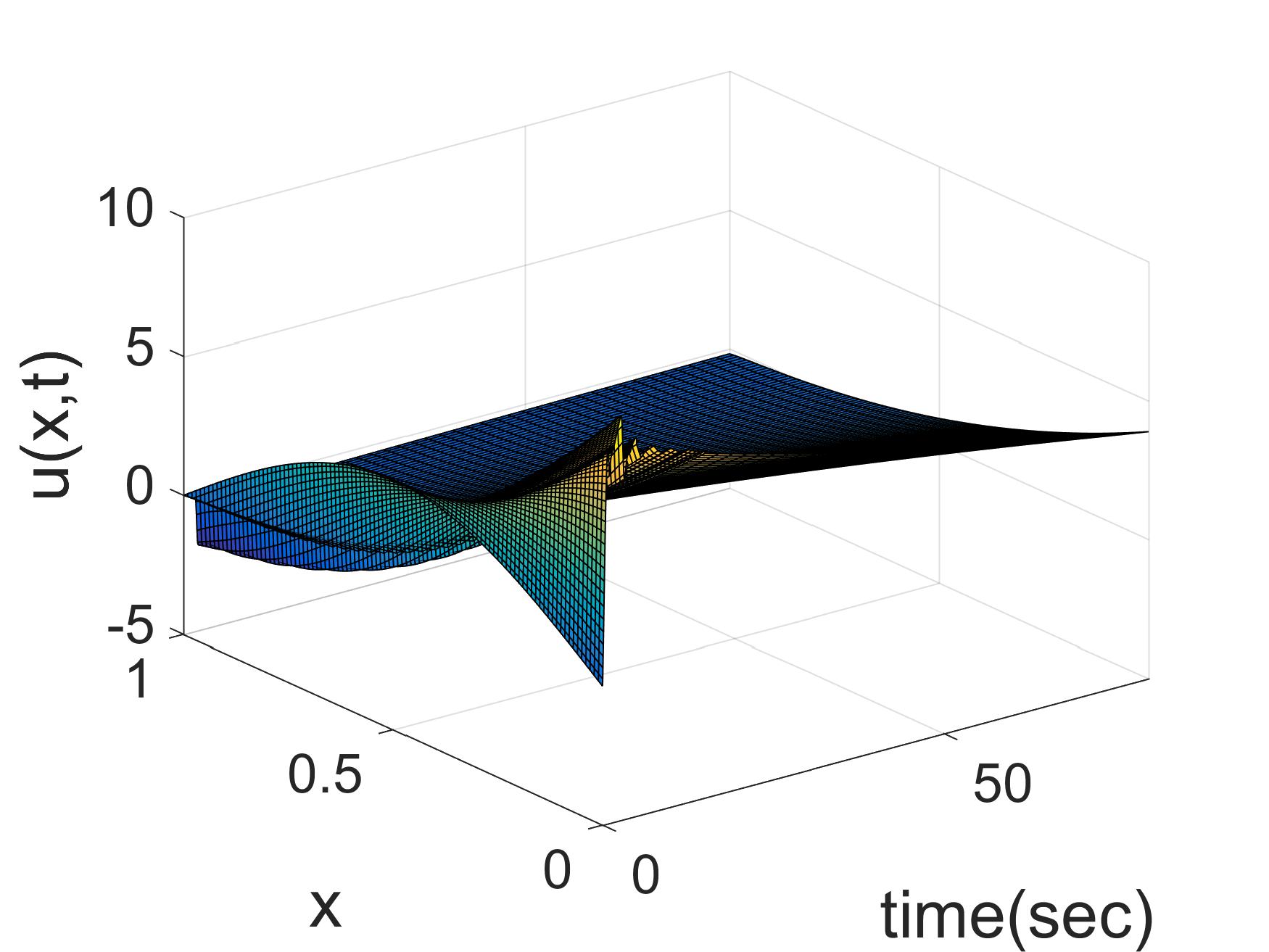}}
                \subfigure[]{\includegraphics[width=0.32\textwidth]{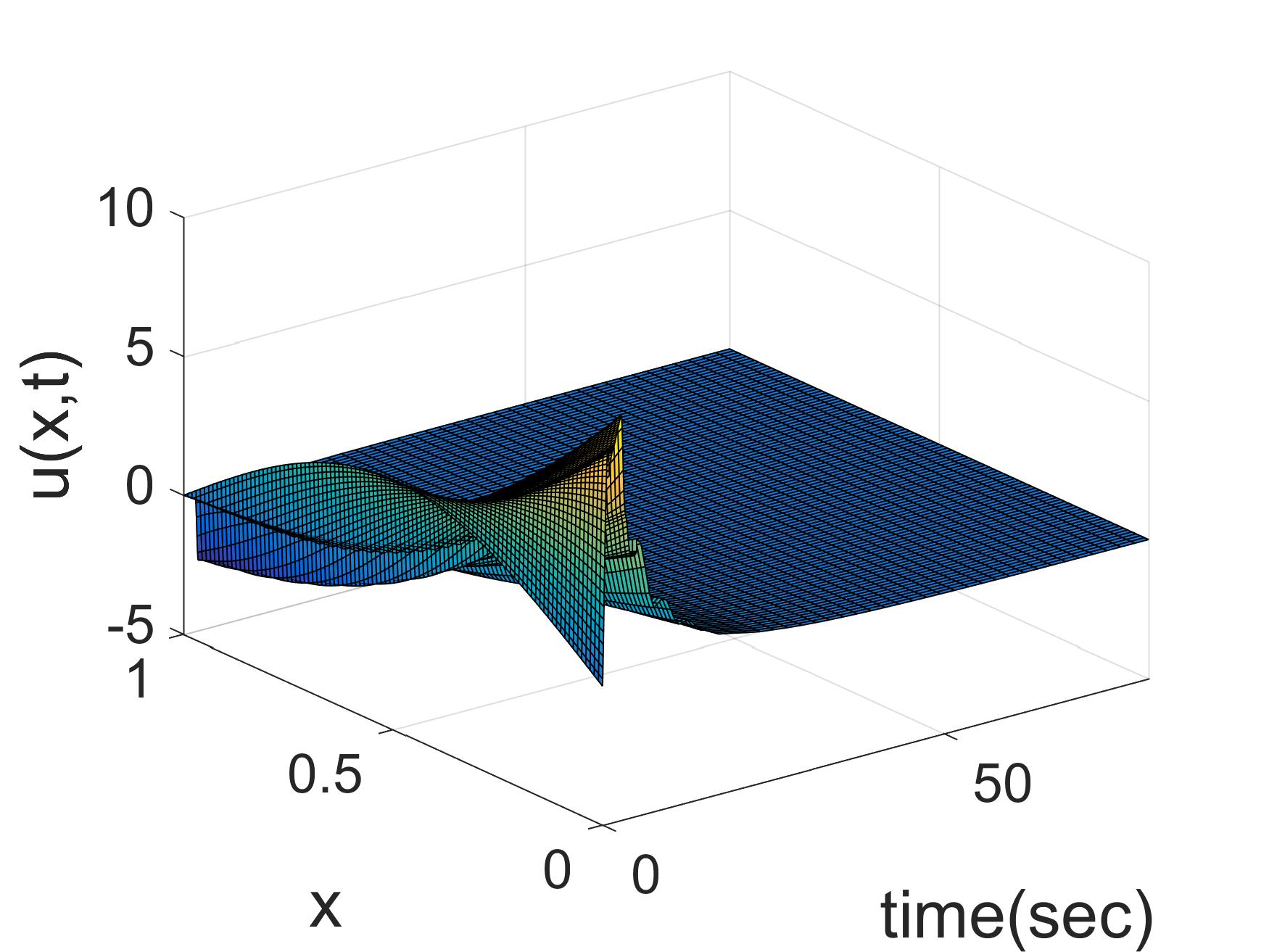}}
                \subfigure[]{\includegraphics[width=0.32\textwidth]{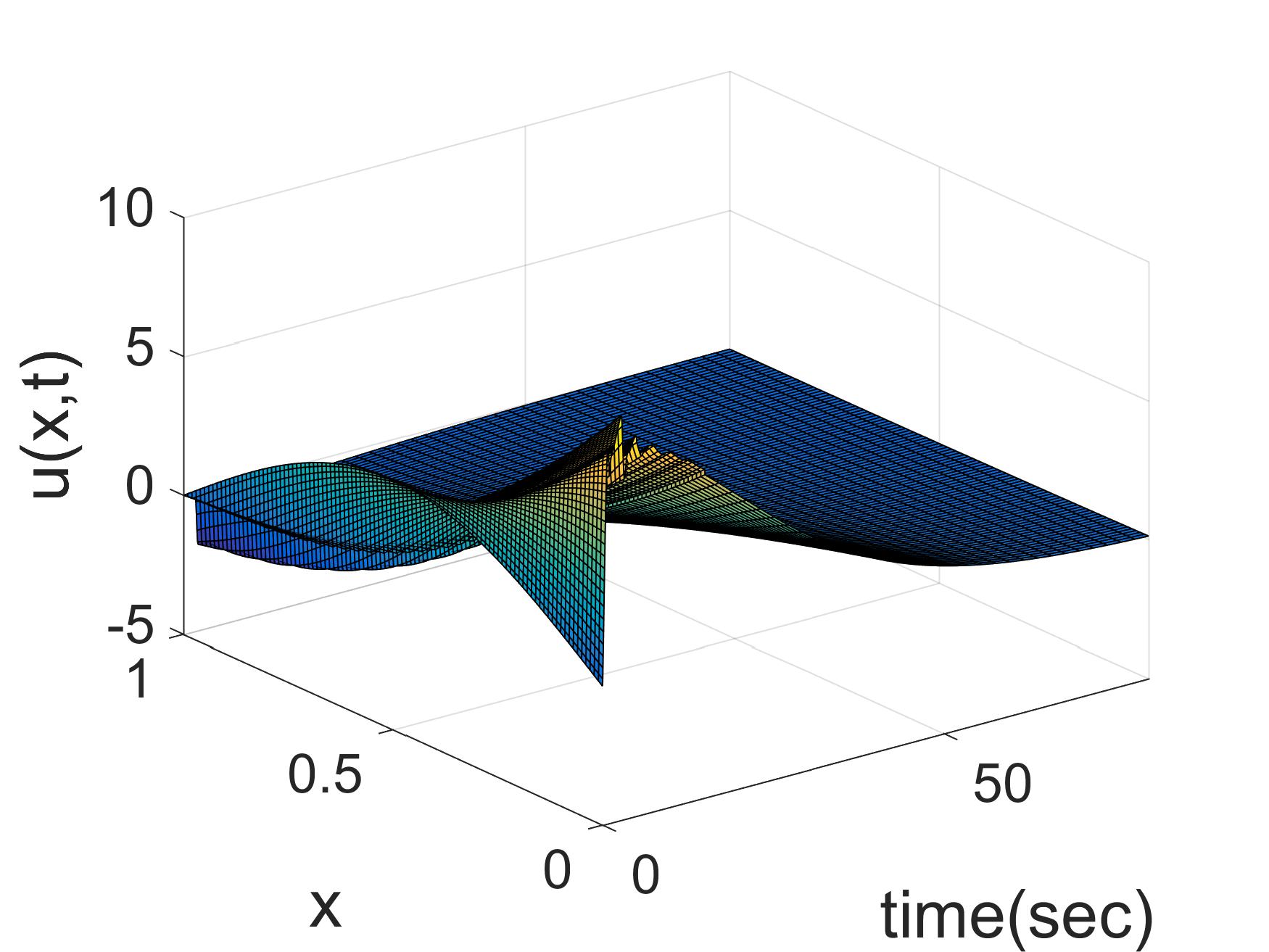}}
                \caption{The closed-loop system dynamics  with $u_0(x)$, $v_0(x)$ and $\hat D(0)$. (a) The distributed state $u(x,t)$ with nonadaptive control. (b) The distributed state $u(x,t)$ with $\hat D(0)=1$. (c) The distributed state $u(x,t)$ with $\hat D(0)=3$.}\label{figure}
        \end{figure}
        \begin{figure}[htbp]
                \centering
                \subfigure[]{\includegraphics[width=0.495\textwidth]{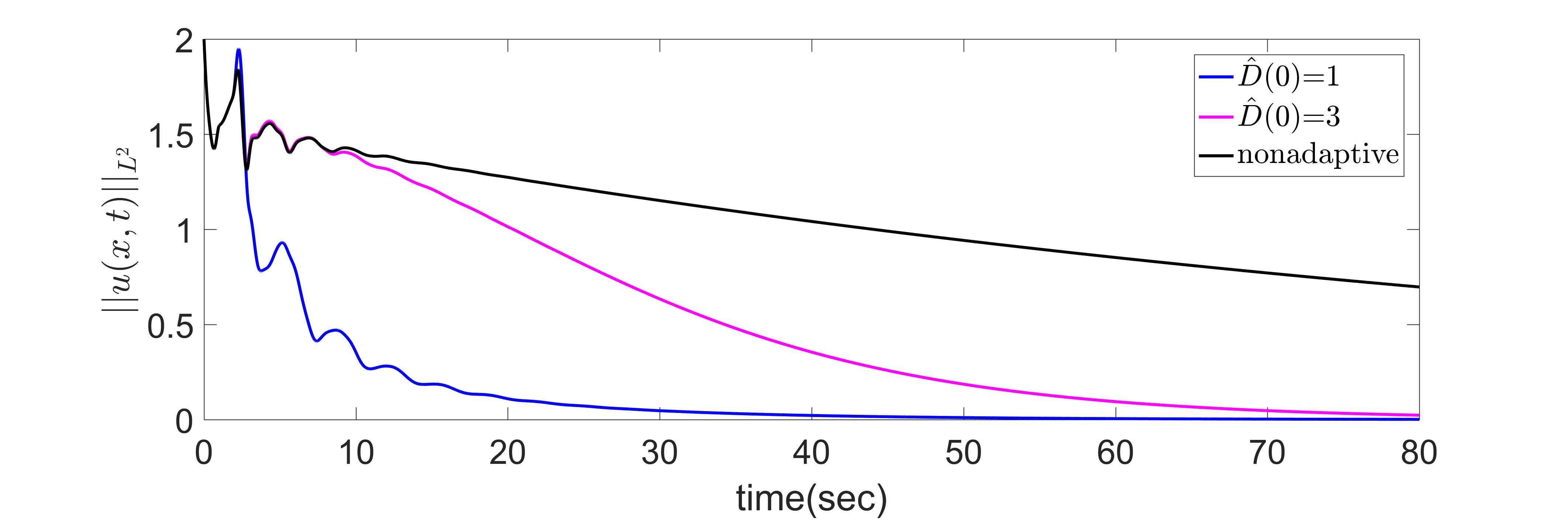}}
                \subfigure[]{\includegraphics[width=0.495\textwidth]{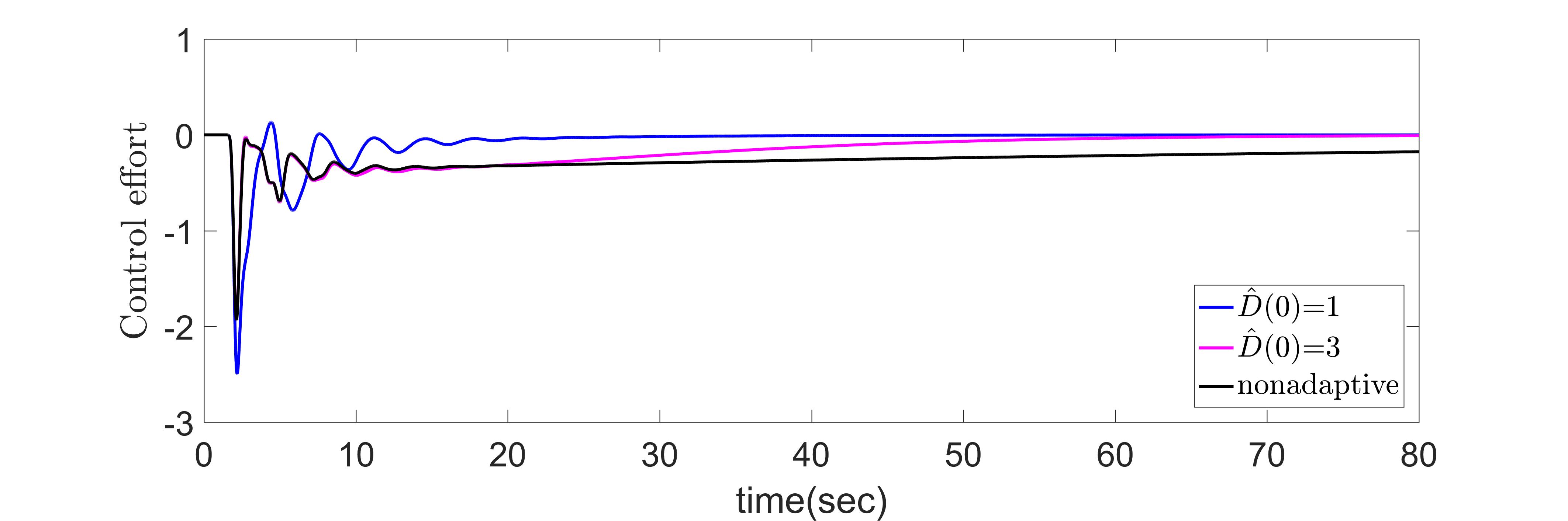}}
                \subfigure[]{\includegraphics[width=0.495\textwidth]{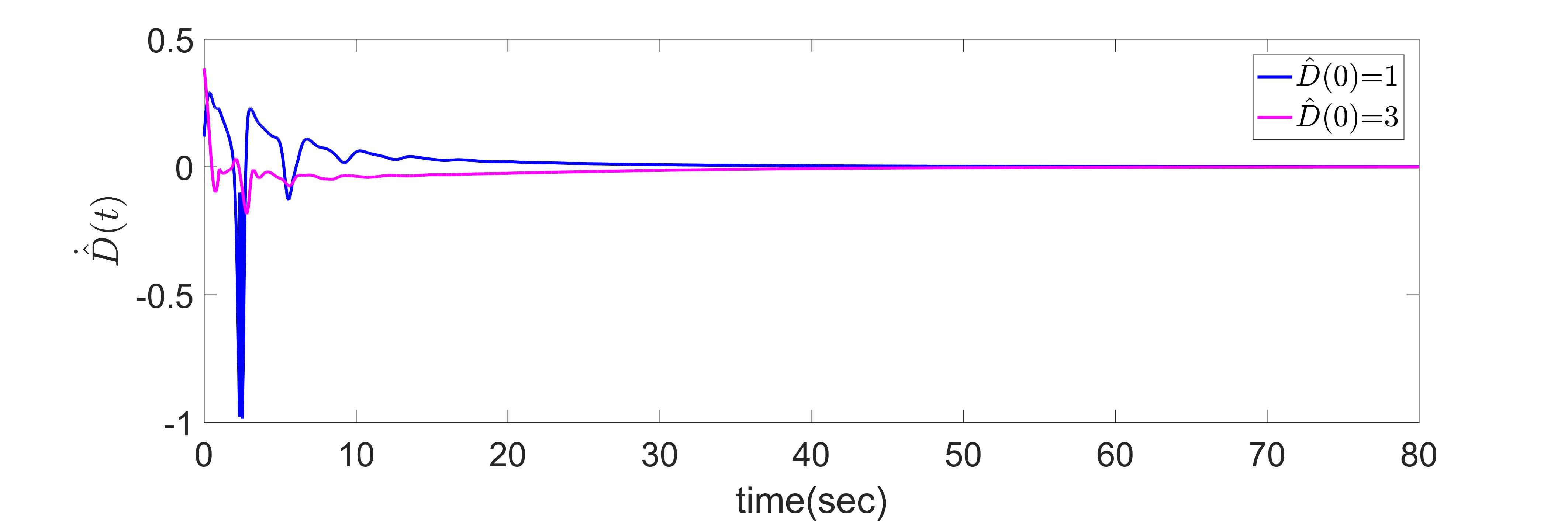}}
                \subfigure[]{\includegraphics[width=0.495\textwidth]{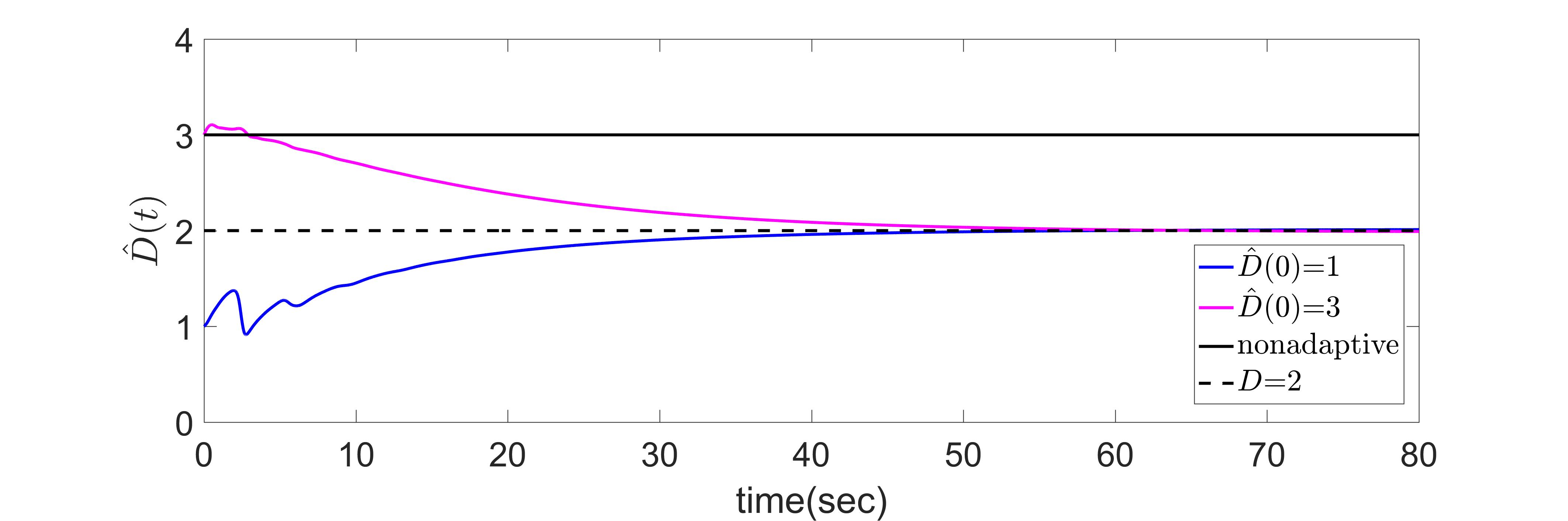}}
                \caption{The closed-loop system dynamics with $u_0(x)$, $v_0(x)$ and to $\hat D(0)$ with and without adaptation. (a) $L^2$-norm of the plant state $u(x,t)$. (b) The time evolution of the control signal.  (c) The dynamics of the update law $\dot{\hat D}(t)$. (d) The time-evolution of the estimate of the unknown parameter $\hat D(t)$.}\label{figure2}
        \end{figure}
        
        \section{Conclusion}\label{6}
        We have studied a class of first-order hyperbolic PIDEs systems with an input subject to an unknown time delay. By utilizing an infinite-dimensional representation of the actuator delay, the system was transformed into a cascading structure consisting of a transport PDE and a PIDE. We successfully established global stability results by designing a parameter update law using the well-known infinite-dimensional backstepping technique and a Lyapunov argument. Furthermore, we analyzed the well-posedness of the system, taking into account the added difficulty caused by the presence of nonlinear terms. Through numerical simulations, we have demonstrated the effectiveness of the proposed method. This research contributes to the understanding and control of systems with unknown time delays and provides valuable insights into the stability analysis and parameter update design for such systems. Future work may involve extending these findings to more complex systems or considering additional constraints and uncertainties.
        
        
        \section*{Acknowledgments}
        This work is partly supported by partially supported by National Natural Science Foundation of China (62173084, 61773112), the Natural Science Foundation of Shanghai (23ZR1401800).
        %
        %
        \bibliographystyle{ieeetr}
        \bibliography{Adaptive_TAC}  
\end{document}